\newtheorem{theo}{Theorem}[section]
\newtheorem{claim}[theo]{Claim}
\newtheorem{conj}[theo]{Conjecture}
\theoremstyle{definition}
\newtheorem{defi}[theo]{Definition}
\theoremstyle{remark}
\newtheorem{rem}[theo]{Remark}
\newcommand{\dbar}{\overline{\partial}}
\newcommand{\e}{\varepsilon}
\newcommand{\I}[1]{\mathcal{I}(#1)}
\newcommand{\nd}[1]{{\rm{nd}}(#1)}
\newcommand{\ndrel}[1]{{\rm{nd_{rel}}}(#1)}
\begin{document}

\title[Variation of numerical dimension of singular hermitian line bundles]
{Variation of numerical dimension\\ of singular hermitian line bundles}

\author{Shin-ichi MATSUMURA}

\address{Mathematical Institute, Tohoku University, 
6-3, Aramaki Aza-Aoba, Aoba-ku, Sendai 980-8578, Japan.}

\email{{\tt mshinichi-math@tohoku.ac.jp, mshinichi0@gmail.com}}

\date{\today, version 0.01}

\renewcommand{\subjclassname}{%

\textup{2010} Mathematics Subject Classification}

\subjclass[2010]{Primary 32J25, Secondary 14F17, 58A14.}

\keywords
{Injectivity theorem, 
Vanishing theorem, 
Singular hermitian metrics, 
Multiplier ideal sheaves, 
Numerical dimension, 
Log canonical singularities.}

\dedicatory{Dedicated to Professor Kang-Tae Kim 
on the occasion of his 60th birthday}

\maketitle

\begin{abstract}
The purpose of this paper is to give two supplements for vanishing theorems\,$:$ 
One is a relative version of the Kawamata-Viehweg-Nadel type vanishing theorem, 
which is obtained from an observation for the variation of the numerical dimension of singular hermitian line bundles. 
The other is an analytic injectivity theorem for log canonical pairs on surfaces, 
which can be seen as a partial answer for Fujino's conjecture.  
\end{abstract}

%\tableofcontents

\section{Introduction}\label{Sec1}
The injectivity theorem 
is one of the most important generalizations of the Kodaira vanishing theorem, 
and it plays an important role in complex geometry in the last decades, 
in which analytic methods and algebraic geometric methods 
have been nourishing each other. 
After Tankeev's pioneer work in \cite{Tan71}, 
Koll\'ar in \cite{Kol86a} and \cite{Kol86b} 
established the celebrated injectivity theorem 
for semi-ample line bundles on projective manifolds by Hodge theory. 
Enoki in \cite{Eno90} generalized Koll\'ar's injectivity for semi-positive line bundles on compact K\"ahler manifolds by the theory of harmonic integrals, 
and Takegoshi in \cite{Tak95} gave a relative version of Enoki's injectivity 
for K\"ahler morphisms. 
We recently obtained a further generalization of them for 
pseudo-effective line bundles with singular hermitian metrics 
by a combination of the theory of harmonic integrals and $L^2$-methods for $\dbar$-equations (for example see \cite{FM16}, \cite{Mat15}, \cite{Mat16}, and \cite{Mat13}).

In this paper, 
as an application of \cite{Mat16}, 
we prove a relative version of the Kawamata-Viehweg-Nadel type 
vanishing theorem (Theorem \ref{KVN}) for K\"ahler morphisms, 
by the vanishing theorem on compact K\"ahler manifolds in \cite{Cao14}, 
the solution of the strong openness conjecture in \cite{GZ15a} 
(see also \cite{GZ15b}, \cite{Hie14}, \cite{Lem14}), 
and an observation for the variation of the numerical dimension of 
singular hermitian line bundles (Theorem \ref{num}). 
Moreover, as an application of \cite{Mat17}, 
we give an affirmative answer for Fujino's conjecture (Conjecture \ref{F-conj}) 
in the two dimensional cases (Theorem \ref{mainn}).

\begin{theo}[{Variation of numerical dimensions, cf. \cite[Proposition 1.6]{Mat16}}]\label{num} 
Let $\pi \colon X \to B$ be a smooth proper K\"ahler morphism 
from a  complex manifold $X$ to a complex manifold $B$, 
and let $T$ be a positive $d$-closed $(1,1)$-current on $X$. 
Then there is a subset $C \subset B$ of Lebesgue measure zero with the following property\,$:$
\vspace{0.1cm}
\\
For an arbitrary point $b \in B \setminus C$, the restriction $T|_{X_{b}}$ of $T$ 
to the fiber $X_{b}:=\pi^{-1}(b)$ is well-defined, and  
the numerical dimension $\nd{X_b, T|_{X_{b}}}$ of $T|_{X_{b}}$ on $X_b$
does not depend on $b \in B \setminus C$. 
\end{theo}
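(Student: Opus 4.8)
The plan is to describe the numerical dimension through non-pluripolar Monge--Amp\`ere products and to reduce the assertion to the flatness of the Gauss--Manin connection. Fix on $X$ a real $d$-closed $(1,1)$-form $\omega$ whose restriction to each fibre is K\"ahler (provided by the K\"ahler morphism $\pi$), and let $n$ denote the fibre dimension. For a positive $d$-closed $(1,1)$-current $S$ on a compact K\"ahler manifold $Y$ of dimension $n$, the numerical dimension is governed by the mass of the non-pluripolar products $\langle (S+\e\,\omega)^{k}\rangle$; concretely, $\nd{Y,S}$ is the largest $k$ for which
\[
\limsup_{\e\to 0^{+}}\ \int_{Y}\bigl\langle (S+\e\,\omega)^{k}\bigr\rangle\wedge \omega^{\,n-k}>0,
\]
and this positivity is insensitive to the K\"ahler form wedged in. Thus it suffices to find a null set $C\subset B$ off which, for every $k$ and every $\e$ in a fixed countable set accumulating at $0$, the fibre integral $I_{k,\e}(b):=\int_{X_{b}}\langle (T|_{X_{b}}+\e\,\omega)^{k}\rangle\wedge \omega^{\,n-k}$ is independent of $b$. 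Since for $\e>0$ this integral depends continuously on $\e$, the $\limsup$ as $\e\to 0^{+}$ is computed along any fixed sequence $\e_{j}\downarrow 0$, so the countable union of the corresponding exceptional sets yields $C$.

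The first step is to make sense of the restriction and to compare it with the non-pluripolar product. Writing $T=\theta+\ddbar\varphi$ locally with $\theta$ smooth and $\varphi$ quasi-plurisubharmonic, Fubini's theorem applied to $\varphi\in L^{1}_{\mathrm{loc}}$ shows $\varphi|_{X_{b}}\not\equiv-\infty$ for almost every $b$, so $T|_{X_{b}}$ is a well-defined positive $d$-closed current on $X_{b}$. The decisive point is the compatibility
\[
\bigl\langle (T+\e\,\omega)^{k}\bigr\rangle\big|_{X_{b}}=\bigl\langle (T|_{X_{b}}+\e\,\omega)^{k}\bigr\rangle\qquad\text{for almost every }b,
\]
where on the left the non-pluripolar product is formed on the total space $X$. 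I would establish this from the very construction of the product: truncate the potentials at level $-m$, where the Monge--Amp\`ere operators have bounded potentials and restriction to a generic fibre of $\pi$ is unproblematic, and pass to the limit $m\to\infty$; the fibres over which mass is lost in the limit form a set of measure zero, which I absorb into $C$.

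Granting this, set $\Theta_{k,\e}:=\langle (T+\e\,\omega)^{k}\rangle$, which is $d$-closed by the theory of non-pluripolar products and therefore carries a fixed cohomology class $[\Theta_{k,\e}]\in H^{2k}(X;\mathbb{R})$. For a generic $b$ the slice $\Theta_{k,\e}|_{X_{b}}$ represents the restricted class $i_{b}^{*}[\Theta_{k,\e}]$, where $i_{b}\colon X_{b}\hookrightarrow X$ is the inclusion, and pairing a closed current against the closed form $\omega^{n-k}$ depends only on cohomology classes, so that the compatibility of the previous step gives
\[
I_{k,\e}(b)=i_{b}^{*}[\Theta_{k,\e}]\cdot\bigl(i_{b}^{*}[\omega]\bigr)^{\,n-k}\qquad\text{for almost every }b.
\]
Because $\pi$ is smooth and proper, Ehresmann's theorem makes the cohomology of the fibres into local systems equipped with the Gauss--Manin connection, and the restrictions of the fixed global classes $[\Theta_{k,\e}]$ and $[\omega]$ are parallel sections. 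As cup product and integration over the fibre are flat operations, the intersection number on the right-hand side is locally constant on $B$, hence constant since $B$ is connected. Combined with the reduction of the first paragraph, this shows that $I_{k,\e}(b)$, and therefore $\nd{X_{b},T|_{X_{b}}}$, is independent of $b\in B\setminus C$.

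The main obstacle is the compatibility of the non-pluripolar product with restriction to a generic fibre, together with the verification that the exceptional fibres form a set of Lebesgue measure zero; this is where the truncation argument and a Fubini-type control of the lost mass are needed, and it is genuinely the place where the null set $C$ is produced. The remaining ingredients, namely the countable reduction in $\e$ and the constancy of intersection numbers of parallel classes, are routine once the flatness of the Gauss--Manin connection is invoked.
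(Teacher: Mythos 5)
There are two genuine gaps, both located before the Gauss--Manin step (which, incidentally, is essentially the same mechanism the paper uses: the push-forward of a closed form under a proper submersion is a closed $0$-current, hence constant).

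First, your starting point --- the characterization of $\nd{Y,S}$ as the largest $k$ with $\limsup_{\e\to 0}\int_{Y}\langle (S+\e\omega)^{k}\rangle\wedge\omega^{n-k}>0$ --- is not the definition used in the paper, and the two are not equivalent. The paper (following Cao) defines $\nd{T}$ through an \emph{equisingular approximation} $\{T_k\}$ with analytic singularities, whose only link to the singularities of $T$ is through multiplier ideals: $\I{m(1+\delta)T_k}\subset\I{mT}$. In particular $\nd{T}$ ignores any singular behaviour of $T$ that is invisible to multiplier ideals. The non-pluripolar product $\langle (T+\e\omega)^k\rangle$, by contrast, sees all of the singularities of $T$ and can lose mass on non-pluripolar sets: there exist $\omega$-psh potentials with zero Lelong numbers (hence trivial $\I{mT}$ for all $m$, so that the equisingular approximation is essentially smooth and Cao's $\nd{}$ equals $n$) whose non-pluripolar Monge--Amp\`ere mass is deficient, so that your $\limsup$ can vanish for $k=n$. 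Without a proof that your invariant coincides with the one in the statement, the argument computes the constancy of the wrong quantity.

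Second, the step you yourself flag as decisive --- that the non-pluripolar product commutes with restriction to a generic fibre, $\langle (T+\e\omega)^k\rangle|_{X_b}=\langle (T|_{X_b}+\e\omega)^k\rangle$ for a.e.\ $b$ --- is not established by the truncation sketch. For $k\geq 2$ the left-hand side is a $(k,k)$-current on the total space and "restriction" must mean slicing; and already at the bounded-potential (Bedford--Taylor) stage the Monge--Amp\`ere operator does not commute with restriction to submanifolds, so the problem is not only the control of mass lost as the truncation level tends to infinity. A Fubini argument handles $(1,1)$-currents (restriction of potentials) and multiplier ideals, which is exactly how the paper produces its null set via Ohsawa--Takegoshi plus Fubini; it does not by itself handle higher self-intersections. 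The paper circumvents this entirely: because each $T_k$ has analytic singularities, a modification $f_k$ writes $f_k^{*}T_k=P_k+[E_k]$ with $P_k$ smooth, the relevant fibre integral becomes $\int_{X_{k,b}}(P_k^{\,d}\wedge f_k^{*}\omega^{n-d})|_{X_{k,b}}$, i.e.\ a fibre integral of a globally defined smooth closed $(n,n)$-form, and no restriction of a genuinely singular Monge--Amp\`ere product is ever needed. To repair your proof you would either have to prove the slicing compatibility for non-pluripolar products (a substantial result not in the literature you invoke) or reduce, as the paper does, to currents with analytic singularities.
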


\begin{theo}[Relative vanishing theorem of Kawamata-Viehweg-Nadel type] \label{KVN} 
Let $\pi \colon X \to \Delta$ be a surjective proper K\"ahler morphism 
from a complex manifold $X$ to an analytic space $\Delta$, 
and $(F,h)$ be a singular hermitian line bundle on $X$ with semi-positive curvature.

Then we have 
$$
R^q \pi_{*}(K_{X}\otimes F \otimes \I{h})=0
\quad \text{for every $q>n -\ndrel{F,h} $}, 
$$
where $n$ is the relative dimension of $\pi \colon X \to \Delta$ and 
$\ndrel{F,h}=\ndrel{\sqrt{-1}\Theta_{h}(F)}$ is the relative numerical dimension defined 
by Definition \ref{def-rel}.  
Here $K_X$ is the canonical bundle on $X$, 
$\I{h}$ is the multiplier ideal sheaf of $h$, 
$R^q \pi_{*}(\bullet)$ is the $q$-th direct image sheaf. 
\end{theo}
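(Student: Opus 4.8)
The plan is to follow Koll\'ar's strategy in the transcendental setting: first establish the vanishing of the higher direct image on a dense open subset of the base by a purely fiberwise argument, and then propagate it to the whole base by torsion-freeness. Since the assertion is local on $\Delta$ and the sheaf $R^q \pi_{*}(K_{X}\otimes F \otimes \I{h})$ is coherent by Grauert's direct image theorem, I would shrink $\Delta$ to a relatively compact Stein open set and, decomposing it into irreducible components and passing to the reduction, reduce to the case where $\Delta$ is reduced and irreducible.

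For the generic vanishing, I would work over the Zariski-open locus $\Delta_{0} \subset \Delta$ on which $\pi$ is smooth, so that the fibers $X_{b}$ are compact K\"ahler manifolds, and then remove the measure-zero set $C$ furnished by Theorem \ref{num}. For $b \in \Delta_{0} \setminus C$ the current $T|_{X_{b}}$ with $T = \sqrt{-1}\Theta_{h}(F)$ is well-defined, and by Definition \ref{def-rel} together with Theorem \ref{num} its numerical dimension $\nd{X_b, T|_{X_{b}}}$ coincides with the relative numerical dimension $\ndrel{F,h}$. The strong openness theorem of \cite{GZ15a} provides, for $b$ outside a further measure-zero set, the restriction formula $\I{h}|_{X_{b}} = \I{h|_{X_{b}}}$, so that $(K_{X}\otimes F \otimes \I{h})|_{X_{b}} \cong K_{X_{b}} \otimes F|_{X_{b}} \otimes \I{h|_{X_{b}}}$ after the adjunction identification $K_{X}|_{X_{b}} \cong K_{X_{b}}$. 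Cao's vanishing theorem \cite{Cao14} on the compact K\"ahler manifold $X_{b}$ then yields $H^{q}(X_{b}, K_{X_{b}} \otimes F|_{X_{b}} \otimes \I{h|_{X_{b}}})=0$ for every $q > n - \ndrel{F,h}$. Restricting to a dense open subset on which $K_{X}\otimes F \otimes \I{h}$ is $\pi$-flat (generic flatness), I would invoke Grauert's base change theorem to convert this fiberwise vanishing into $R^q \pi_{*}(K_{X}\otimes F \otimes \I{h})=0$ over that open subset.

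The final step is to upgrade the generic vanishing to vanishing over all of $\Delta$. Here I would invoke the relative injectivity theorem of \cite{Mat16}: applied to multiplication by pullbacks of holomorphic functions on $\Delta$, it shows that $R^q \pi_{*}(K_{X}\otimes F \otimes \I{h})$ is torsion-free. A torsion-free coherent sheaf on the reduced irreducible space $\Delta$ that vanishes on a nonempty open subset must be zero, since torsion-freeness forces the map to its generic stalk to be injective. This yields the desired vanishing.

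I expect the main obstacle to lie in the second step, the fiberwise analysis: one must justify that restriction of the multiplier ideal sheaf commutes with restriction to the generic fibers, which is exactly where the strong openness of \cite{GZ15a} is essential, and that the fiberwise numerical dimension is generically the constant $\ndrel{F,h}$ via Theorem \ref{num}, and then ensure that Grauert base change is applicable after arranging flatness. By contrast, the passage from generic to global vanishing is comparatively formal once the torsion-freeness supplied by \cite{Mat16} is available; the role of torsion-freeness is precisely to circumvent any direct control of the cohomology of the singular fibers of $\pi$.
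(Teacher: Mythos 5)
Your proposal is correct and follows essentially the same route as the paper: generic fiberwise vanishing on the smooth fibers via Cao's theorem combined with the constancy of the fiberwise numerical dimension (Theorem \ref{num}) and the restriction formula for multiplier ideals, then base change over the locus where the direct image is locally free, and finally torsion-freeness of $R^q\pi_*(K_X\otimes F\otimes \I{h})$ from \cite{Mat16} to conclude. The only minor discrepancy is one of attribution: the paper obtains the restriction formula $\I{h}|_{X_b}=\I{h|_{X_b}}$ for almost all $b$ from the Ohsawa--Takegoshi extension theorem and Fubini's theorem (the argument of Claim \ref{claim}), and invokes \cite{GZ15a} alongside \cite{Cao14} to make the fiberwise vanishing theorem work, rather than for the restriction formula itself.
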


Ambro and Fujino proved an injectivity theorem 
for log canonical (lc for short) pairs by Hodge theory 
(see \cite{Amb03}, \cite{Amb14}, \cite{EV92}, 
\cite[Section 6]{Fuj11}, \cite{Fuj12b}, and \cite{Fuj13b}). 
It is a natural and quite interesting problem 
to ask whether the injectivity theorem for lc pairs 
can be generalized from semi-ample line bundles to semi-positive line bundles, 
which was first posed by Fujino.

\begin{conj}[{\cite[Conjecture 2.21]{Fuj15b}, cf. \cite[Problem 1.8]{Fuj13a}}] 
\label{F-conj}
Let $D$ be a simple normal crossing divisor on a compact K\"ahler manifold $X$ 
and $F$ be a semi-positive line bundle on $X$ 
$($that is, it admits a smooth hermitian metric with semi-positive curvature$)$. 
Assume that there is a section $s \in H^{0}(X, F^m)$ such that 
the zero locus $s^{-1}(0)$ 
contains no lc centers of the lc pair $(X,D)$. 
Then, the multiplication map induced by the tensor product with $s$ 
\begin{equation*}
H^{q}(X, K_{X} \otimes D \otimes F) 
\xrightarrow{\otimes s} 
H^{q}(X, K_{X} \otimes D \otimes F^{m+1} )
\end{equation*}
is injective for every $q$. 
\end{conj}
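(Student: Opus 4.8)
The plan is to upgrade the Hodge-theoretic argument of Ambro--Fujino for semi-ample $F$ to the semi-positive case, replacing pure Hodge theory by the harmonic-integral and $L^2$ techniques underlying \cite{Mat16} and \cite{Mat17}, and to organize the proof as a double induction on $\dim X$ and on the number of components of $D$ by means of the Poincar\'e residue. Write $D=\sum_{i=1}^{k}D_i$ with smooth components. Since $D$ is simple normal crossing, the lc centers of $(X,D)$ are precisely the strata $\bigcap_{i\in I}D_i$, and for each $i$ adjunction yields the residue short exact sequence
$$
0 \longrightarrow K_X(D-D_i)\otimes F \longrightarrow K_X(D)\otimes F \xrightarrow{\ \mathrm{Res}\ } K_{D_i}\big((D-D_i)|_{D_i}\big)\otimes F|_{D_i} \longrightarrow 0,
$$
whose quotient is the log canonical adjoint bundle of the lower-dimensional lc pair $(D_i,(D-D_i)|_{D_i})$. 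Because $s$ is the pullback of a section of $F^m$, tensoring with $s$ commutes with all three arrows, so I obtain a morphism between the long exact cohomology sequences attached to the $F$- and $F^{m+1}$-versions.

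For the base case $D=0$ I would prove that $H^q(X,K_X\otimes F)\xrightarrow{\otimes s}H^q(X,K_X\otimes F^{m+1})$ is injective whenever $s\not\equiv0$, which is automatic here since the only lc center of $(X,0)$ is $X$ itself. This is the Enoki--Takegoshi type statement: represent a class by a harmonic $F$-valued $(n,q)$-form $\alpha$, apply the Bochner--Kodaira--Nakano identity together with $\sqrt{-1}\Theta_{h}(F)\ge0$ to conclude that the curvature term annihilates $\alpha$, and then verify that $s\alpha$ is again harmonic for the rescaled metric $h^{m+1}$; if $[s\alpha]=0$ then $s\alpha=0$ as a form, hence $\alpha\equiv0$ on the dense set $\{s\ne0\}$ and therefore everywhere. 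The hypothesis on $s^{-1}(0)$ is exactly what makes the induction close, since the lc centers of $(X,D-D_i)$ and of $(D_i,(D-D_i)|_{D_i})$ are among those of $(X,D)$, so $s$ and $s|_{D_i}$ continue to miss all lc centers of the smaller pairs.

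The inductive step is then a diagram chase on the two residue cohomology sequences. This chase closes cleanly once one knows that the connecting homomorphisms of the residue sequence vanish, i.e. that the sequence stays short exact after taking cohomology (the $E_1$-degeneration, or strictness). Granting this, if $x\in H^q(K_X(D)\otimes F)$ dies after $\otimes s$, then injectivity on the boundary pair forces $\mathrm{Res}(x)=0$, so $x$ comes from $H^q(K_X(D-D_i)\otimes F)$, and injectivity there finishes the argument; the induction then terminates at $D=0$ and at the smallest strata.

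The hard part will be the analytic package that the semi-positive setting requires in place of Hodge theory: namely, to realize $H^q(X,K_X(D)\otimes F)$ by harmonic forms and to prove the degeneration above for a merely semi-positive, possibly curvature-degenerate $h$. Concretely I would work on $X^{\circ}:=X\setminus D$ with a complete K\"ahler metric of Poincar\'e type along $D$ and identify the log canonical cohomology with $L^2$-Dolbeault cohomology for this metric and $h$. The obstruction is that the curvature of $h$ may vanish exactly where the Poincar\'e metric degenerates, so the Bochner--Kodaira inequality acquires boundary contributions that are delicate to control, and one must establish enough regularity of harmonic representatives up to $D$ for both the implication $[s\alpha]=0\Rightarrow s\alpha=0$ and the strictness to survive. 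Achieving this uniformly in all dimensions is precisely what remains open; in the surface case, however, the lc strata are only curves and points, the Poincar\'e-type analysis near $D$ is classical, and the induction terminates after a single step, which is what renders the two-dimensional case accessible through the results of \cite{Mat17}.
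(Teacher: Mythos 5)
What you have written is a strategy outline rather than a proof, and you in effect concede this yourself: the statement is recorded in the paper as Conjecture \ref{F-conj} precisely because it is open, and the paper proves only the two-dimensional case (Theorem \ref{mainn}). The decisive gap is the step you flag as ``the hard part'': the strictness/$E_1$-degeneration of the residue long exact sequences --- equivalently, the vanishing of the connecting homomorphisms --- for a merely semi-positive metric $h$. For semi-ample $F$ this is exactly the Hodge-theoretic input of Ambro--Fujino, and no analytic substitute is currently known when the curvature $\sqrt{-1}\Theta_{h}(F)$ is allowed to degenerate along $D$; the Bochner--Kodaira argument on $X\setminus D$ with a complete Poincar\'e-type metric produces precisely the uncontrolled boundary contributions you mention. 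Without this, your inductive diagram chase does not close: from $[\,s|_{D_i}\cdot \mathrm{Res}(x)\,]=0$ and injectivity on the stratum you may conclude $\mathrm{Res}(x)=0$, but the lift of $x$ to $H^{q}(X,K_X(D-D_i)\otimes F)$ is only well defined modulo the image of the connecting map, and injectivity for the smaller pair then says nothing about $x$ itself unless the rows are genuinely short exact on cohomology. Your base case ($D=0$, the Enoki-type injectivity) is sound, but it is the only part of the induction that is actually established.

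For comparison, the paper's proof of the surface case uses neither residue sequences nor Poincar\'e metrics. It invokes \cite[Theorem 1.6]{Mat17} to reduce the problem to showing that a class $\alpha$ killed by $\otimes s$ lies in the image of $\theta_{D}\colon H^{q}(X,K_X\otimes F)\to H^{q}(X,K_X\otimes D\otimes F)$, and then runs an elementary diagram chase: it splits $D=D_J+D_K$ according to whether a component meets $s^{-1}(0)$, uses that $\otimes s$ is invertible on the cohomology of $D_K$ (where $s$ is nowhere vanishing), and uses that $\deg F|_{D_i}>0$ forces the relevant $H^{1}$ on the curves $D_i$, $i\in J$, to vanish. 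This is why the surface case goes through under the weaker hypothesis that $s^{-1}(0)$ contains no component of $D$, and also why the method does not obviously extend to higher dimensions, where the strata are no longer curves and the vanishing on them fails.
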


In \cite{Mat17}, we proved the above conjecture 
in the case of purely log terminal pairs 
by developing techniques in \cite{Mat13} and \cite{Tak97} 
(see \cite{Che15}, \cite{HLWY16}, \cite{KSX17} for another interesting approach). 
In this paper, as an application of \cite{Mat17}, 
we affirmatively solve Fujino's conjecture on surfaces 
without any assumptions (Theorem \ref{mainn}). 

\begin{theo}\label{mainn}
Let $D$ be a simple normal crossing divisor on a compact K\"ahler surface $X$ 
and $F$ be a semi-positive line bundle on $X$. 
Assume that there is a section $s \in H^{0}(X, F^m)$ such that 
the zero locus $s^{-1}(0)$ contains no irreducible components of $D$. 
Then the same conclusion as in Conjecture \ref{F-conj} holds. 
\end{theo}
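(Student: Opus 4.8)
The plan is to prove Theorem \ref{mainn} (Fujino's conjecture on surfaces) by reducing the simple normal crossing divisor case to the purely log terminal case already established in \cite{Mat17}. The key geometric observation in dimension two is that the lc centers of the pair $(X,D)$ are either the irreducible curve components $D_i$ of $D$ themselves or the finitely many nodal points $p_{ij}=D_i\cap D_j$ where two components meet. The hypothesis that $s^{-1}(0)$ contains no irreducible component of $D$ is weaker than the hypothesis of Conjecture \ref{F-conj}, which forbids $s^{-1}(0)$ from containing any lc center; so the first step is to understand exactly which lc centers $s^{-1}(0)$ is allowed to meet, namely the zero-dimensional centers $p_{ij}$.

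First I would set up the comparison between the full snc divisor $D$ and a modified boundary. Write $D=\sum_i D_i$ and consider, for a small rational $0<\varepsilon<1$, the pair $(X,(1-\varepsilon)D)$, which is klt (hence the relevant multiplier ideal is trivial generically) but no longer captures the correct boundary. The real mechanism is a residue/adjunction argument: using the long exact cohomology sequence associated to the normalization or to peeling off one component $D_1$ from $D$, I would relate $H^q(X,K_X\otimes D\otimes F)$ to the cohomology on $X$ of $K_X\otimes (D-D_1)\otimes F$ and the cohomology on the curve $D_1$ of the restricted bundle via the adjunction isomorphism $K_X\otimes D|_{D_1}\cong K_{D_1}\otimes (D-D_1)|_{D_1}$. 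The point of working on a surface is that each $D_i$ is a smooth compact curve (a compact Kähler, hence projective, curve), on which the injectivity theorem is classical, and the crossing points are isolated, so the correction terms live in cohomology of skyscraper-type or zero-dimensional objects where injectivity of $\otimes s$ is automatic precisely because $s$ does not vanish identically on any $D_i$ and meets the points $p_{ij}$ in a controlled way.

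Concretely, the plan is to run an induction on the number of components of $D$, at each stage invoking the plt case from \cite{Mat17} for the pair obtained after removing one boundary component, and splicing the resulting injectivity statements through the Mayer–Vietoris/adjunction long exact sequences. The semipositivity of $F$ is preserved under restriction to each $D_i$, and the section $s$ restricts to a nonzero section of $F^m|_{D_i}$ (nonzero because $s^{-1}(0)$ contains no $D_i$), whose zero locus on the curve $D_i$ is a finite set of points; I would check that this finite set avoids the lc centers of the induced pair on $D_i$, which by adjunction are exactly the points $D_i\cap(D-D_i)=\{p_{ij}\}_j$. This last verification is where the full-codimension hypothesis must be used carefully, and it is the crux: I need $s^{-1}(0)|_{D_i}$ to miss the nodes $p_{ij}$, or else to absorb the nodes into the plt framework by a further blow-up that separates them.

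The hard part will be controlling the behavior at the crossing points $p_{ij}$, where two lc centers meet and the pair fails to be plt. The hypothesis of Theorem \ref{mainn} only forbids $s$ from vanishing along entire components, so a priori $s$ may vanish at some node $p_{ij}$, and at such a point the naive adjunction/induction breaks because the residue exact sequences interact. I expect to resolve this by a birational modification: blow up the surface at each node $p_{ij}$ where $s$ vanishes, pull back $(X,D)$ and $(F,h)$, and check that the exceptional curves either become new plt-type boundary components on which \cite{Mat17} applies or contribute trivially to the cohomology by the relative vanishing of Theorem \ref{KVN}; then descend the injectivity back to $X$ using that $R^q$ of the blow-down vanishes in the relevant range and that $\otimes s$ commutes with pushforward. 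Verifying that this blow-up genuinely reduces the problem to the already-solved plt case, and that no cohomological information is lost in the descent, is the main obstacle and the place where the two-dimensionality of $X$ is essential.
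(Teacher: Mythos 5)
There is a genuine gap, and it sits exactly where you flag it: the nodes $p_{ij}$ at which $s$ may vanish. Your proposed fix --- blow up such a node and ``check that the exceptional curves become new plt-type boundary components'' --- does not work as stated. At a node of the snc divisor $D$ the discrepancy of the exceptional curve $E$ is $a(E;X,D)=1-\mathrm{mult}_{p_{ij}}D=-1$, so $E$ is itself an lc place of the pulled-back pair, which therefore remains log canonical but not plt along $E$ and along the two strict transforms meeting it. The blow-up does not convert the pair into one covered by the plt case of \cite{Mat17}, and the descent step (vanishing of the relevant $R^q$ of the blow-down for the twisted sheaf, compatibility of $\otimes s$ with the pushforward) is asserted rather than proved. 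In effect your argument only closes when $s^{-1}(0)$ misses all the nodes, i.e.\ under the stronger hypothesis of Conjecture \ref{F-conj} itself (cf.\ Remark \ref{rem-mainn}); the whole point of Theorem \ref{mainn} is to dispense with that.

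The missing idea is that the nodes are irrelevant if one aims at the right target. The paper does not reduce to the plt case; it uses \cite[Theorem 1.6]{Mat17}, which reduces the injectivity to showing that the class $\alpha$ with $s\alpha=0$ lies in the image of $\theta_D\colon H^q(X,K_X\otimes F)\to H^q(X,K_X\otimes D\otimes F)$, i.e.\ that the entire boundary $D$ can be peeled off from $\alpha$. The peeling is governed by a dichotomy on components, not on lc centers: split $D=D_J+D_K$ where $J=\{i : D_i\cap s^{-1}(0)\neq\emptyset\}$. On $D_K$ the section $s$ is nowhere vanishing, so $\otimes s|_{D_K}$ is an isomorphism and the restriction long exact sequence forces $r_K(\alpha)=0$, lifting $\alpha$ to $H^q(X,K_X\otimes D_J\otimes F)$. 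For each $i\in J$ one has $\deg F|_{D_i}>0$ (a nonzero section of $F^m|_{D_i}$ with nonempty zero locus), hence $H^1(D_i,K_{D_i}\otimes \hat{D}_i\otimes F)=0$ by the vanishing theorem on the curve $D_i$, and the components of $D_J$ are stripped off one at a time. Whether $s$ vanishes at a node never enters: the argument lives in $H^1$ of the individual curves $D_i$, where only the sign of $\deg F|_{D_i}$ and the non-vanishing of $s$ on $D_K$ matter. Your adjunction/restriction exact sequences are the right tool, but aimed at the wrong statement (the plt injectivity theorem rather than membership in $\mathrm{Im}\,\theta_D$), which is why you are forced into the blow-up detour that does not close.
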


\begin{rem}\label{rem-mainn}
The assumption of Theorem \ref{mainn} for the zero locus $s^{-1}(0)$ is weaker than 
that of Conjecture \ref{F-conj}, 
since the zero locus $s^{-1}(0)$ may contain $D_{i} \cap D_j$, 
where $D_i$ is an irreducible component of $D=\sum_{i \in I}D_{i}$. 
\end{rem}

\subsection*{Acknowledgements}

This paper has been written during author's stay in 
Institut de Math\'ematiques de Jussieu-Paris Rive gauche (IMJ-PRG). 
The author would like to thank the members of IMJ-PRG for their hospitality.
He is supported by the Grant-in-Aid 
for Young Scientists (A) $\sharp$17H04821 from JSPS 
and the JSPS Program for Advancing Strategic International Networks 
to Accelerate the Circulation of Talented Researchers.

\section{Proof of the results}\label{Sec2}

\subsection{Proof of Theorem \ref{num}}\label{Sec2-1}

We first recall the definition of the numerical dimension  
of (possibly) singular hermitian line bundles with semi-positive curvature 
(more generally positive $d$-closed $(1,1)$-currents) in \cite{Cao14}.  
For a positive $d$-closed $(1,1)$-current $T$ on a compact K\"ahler manifold $X$, 
we can take a family $\{T_{k}\}_{k=1}^{\infty}$ of 
$d$-closed $(1,1)$-currents (called an equisingular approximation with analytic singularities) with the following properties\,$:$
\begin{itemize}
\item[(0)] $T_{k}$ is a $d$-closed $(1,1)$-current representing the same cohomology class $\{T\}$.
\item[(1)] $T_{k}$ has analytic singularities. 
\item[(2)] $T_k \geq -\e_k \omega$, where $\e_{k} \searrow 0$ and $\omega$ is a fixed K\"ahler form on $X$.
\item[(3)] $T_{k+1}$ is more singular than $T_k$. 
\item[(4)] For rational numbers $\delta>0$ and $m>0$, there exists an integer $k_{0}$ 
such that 
$$\I{(m (1+\delta) T_{k})} \subset \I{m T} \text{ for } k \geq k_{0}.$$ 
\end{itemize}
Here, for a $d$-closed $(1,1)$-current $S$, 
the multiplier ideal $\I{S}$ can be defined by 
the set of holomorphic functions $f$ such that $|f|^{2}e^{-\varphi}$ is locally integrable, 
where $\varphi$ is  a local potential function of $S$. 
Then, by the same way as in \cite{Cao14}, the numerical dimension $\nd{T}$ of $T$ is defined by 
$$
\nd{T} :=\nd{X,T} := \max\{ \nu \in \mathbb{Z}_{\geq 0}\, |\,  \liminf_{k \to \infty} 
\int_{X} (T_{k, \rm{ac}})^{\nu} \wedge \omega^{n-\nu}>0\}, 
$$
where $n$ is the dimension of $X$ and $T_{\rm{ac}}$ is 
the absolutely continuous part of $T$ (see \cite{Bou02} for the definition). 
Note that the above numerical dimension can be expressed 
by the growth of the dimension of the space of global sections,  
in the case where $X$ is a projective manifold and $T$ is a semi-positive curvature current 
of a singular hermitian line bundle (see \cite[Proposition 4.3]{Cao14}).

\begin{proof}[Proof of Theorem \ref{num}]
By replacing $B$ with an open subset in $B$ (if necessarily), 
we may assume that $X$ is a K\"ahler manifold. 
Further we may assume that 
there is an equisingular approximation $\{T_{k}\}_{k=1}^{\infty}$ of $T$ 
satisfying properties (0)--(4) on a (non-compact) manifold $X$, 
since Demailly's approximation theorem (see \cite{Dem92}, \cite{DPS01}), 
which plays a crucial role to obtain such an equisingular approximation, 
still works on a relatively compact set in $X$. 
By an observation for multiplier ideal sheaves and Fubini's theorem,  
we have the following claim\,$:$

\begin{claim}\label{claim}
There is a subset $C \subset B$ of Lebesgue measure zero with the following property\,$:$
For an arbitrary point $b \in B \setminus C$, the restriction $T|_{X_{b}}$ 
$($resp. $T_k|_{X_{b}}$$)$ of $T$ $($resp. $T_k$$)$ to the fiber $X_{b}:=\pi^{-1}(b)$ is well-defined 
$($that is, the restriction of its potential function is not identically $-\infty$$)$, 
and $\{T_k|_{X_{b}}\}_{k=1}^{\infty}$ 
is also an equisingular approximation of $T|_{X_{b}}$ on $X_b$. 
\end{claim}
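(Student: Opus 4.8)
The plan is to build the exceptional set $C$ as a countable union of null sets, one for each condition to be controlled, and to verify properties (0)--(4) of the equisingular approximation fiberwise on its complement. The two workhorses are Fubini's theorem (for well-definedness and for one half of the multiplier-ideal comparison) and a restriction theorem for multiplier ideals of the Ohsawa--Takegoshi type (for the other half).

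First I would settle well-definedness. After shrinking $B$, fix a countable cover of $X$ by coordinate charts on which each of the countably many currents $T$ and $\{T_k\}_{k}$ admits a (quasi-)plurisubharmonic local potential; all such potentials lie in $L^1_{\mathrm{loc}}$, as do the global potential differences $g_k:=\varphi_k-\varphi$ witnessing $\{T_k\}=\{T\}$. Applying Fubini's theorem to each potential and each $g_k$ on each chart produces, outside a null set, restrictions to $X_b$ that are again $L^1_{\mathrm{loc}}$, hence not identically $-\infty$. Collecting these countably many null sets into $C$ makes $T|_{X_b}$, $T_k|_{X_b}$, and $g_k|_{X_b}$ well-defined for $b\notin C$.

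Properties (0)--(3) then follow for $b\notin C$ with little more than bookkeeping. Since $T_k|_{X_b}-T|_{X_b}=\ddbar(g_k|_{X_b})$ with $g_k|_{X_b}\in L^1_{\mathrm{loc}}$, the restricted currents lie in the same class, giving (0); writing a local analytic-singularity potential $\varphi_k=\tfrac{c}{2}\log(\sum_j|g_{j}|^2)+(\text{smooth})$ and noting that for $b\notin C$ the $g_j|_{X_b}$ are not all identically zero shows that $T_k|_{X_b}$ has analytic singularities, giving (1); restricting the positive current $T_k+\e_k\omega$ yields $T_k|_{X_b}\ge -\e_k\,\omega|_{X_b}$ with the same $\e_k\searrow 0$, giving (2); and the pointwise comparison $\varphi_{k+1}\le \varphi_k+O(1)$ survives restriction, giving (3).

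The essential point, and the step I expect to be the main obstacle, is property (4) on $X_b$, where the observation for multiplier ideals enters through two restriction inclusions for the (quasi-)psh potentials. On one side, the Ohsawa--Takegoshi extension theorem gives, for every $b\notin C$ and any current $S$ in play, the inclusion $\I{S|_{X_b}}\subseteq \I{S}|_{X_b}$. On the other side, Fubini applied to the integrals defining the multiplier ideal shows, for a.e. $b$ and for finitely many coherent generators of $\I{S}$, the reverse inclusion $\I{S}|_{X_b}\subseteq\I{S|_{X_b}}$; quantifying over the countably many rational parameters $m,\delta$ and the indices $k$ costs only a further null set, which I absorb into $C$. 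Chaining the Ohsawa--Takegoshi inclusion for $m(1+\delta)T_k$, the restriction to $X_b$ of property (4) on $X$, and the Fubini inclusion for $mT$ then yields, for $k\ge k_0(m,\delta)$,
\[
\I{m(1+\delta)T_k|_{X_b}}\subseteq \I{m(1+\delta)T_k}|_{X_b}\subseteq \I{mT}|_{X_b}\subseteq \I{mT|_{X_b}},
\]
which is exactly property (4) on $X_b$. The delicate part here is that the naive Fubini argument supplies only one inclusion for a.e. $b$, so the Ohsawa--Takegoshi extension is genuinely needed for the opposite direction, and one must check that passing to the fiberwise statement—together with the quantification over the rational constants $\delta,m$ and over $k$—enlarges the exceptional set by a null set only.
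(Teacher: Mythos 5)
Your proposal is correct and follows essentially the same route as the paper: Ohsawa--Takegoshi for the inclusion $\I{\varphi|_{X_b}}\subset\I{\varphi}|_{X_b}$, Fubini plus coherence for the reverse inclusion off a null set, and a countable union over the rational parameters $m,\delta$ and the indices $k$ to assemble $C$. The only (harmless) difference is that you chain the two one-sided inclusions directly to get property (4) on the fiber, whereas the paper records the full equality of restricted multiplier ideals before restricting property (4).
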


\begin{proof}[Proof of Claim \ref{claim}]
We can easily check that 
properties (0)--(3) for $T_{k}|_{X_{b}}$ and $T|_{X_{b}}$ 
still hold on a fiber $X_{b}$, if the restriction of them to $X_b$ is well-defined. 
In general, for a quasi-psh function $\varphi$ on $X$, 
we have the restriction formula $\I{\varphi |_{X_b}} \subset \I{\varphi }|_{X_b}$
by the Ohsawa-Takegoshi $L^2$-extension theorem. 
Further, by Fubini's theorem, we can show that the converse inclusion holds for almost all $b \in B$, 
that is, the subset 
$$
\{b\in B \,|\, \varphi|_{X_{b}} \text{ is not well-defined or } 
\I{\varphi|_{X_{b}}} \not = \I {\varphi}|_{X_{b}} \}
$$
has Lebesgue measure zero. 
Indeed, for a holomorphic function $f$ 
on a (sufficiently small) open set $U$ in $X$, 
Fubini's theorem yields  
$$
\int_{U} |f|^2(z,b){e^{-\varphi(z,b)}} 
= \int_{b\in B} \Big( \int_{z \in X_b \cap U} |f|^2(z,b){e^{-\varphi(z,b)}} \Big),  
$$
where $(z, b)$ is a coordinate on $U$ 
such that $b=\pi(z,b)$ gives a local coordinate on $B$. 
If the left hand side converges, the integrand 
$\int_{z \in X_b \cap U} |f|^2(z,b){e^{-\varphi(z,b)}}$ also converges for almost all $b \in B$. 
This implies that the above subset has Lebesgue measure zero
since multiplier ideal sheaves are coherent sheaves. 

Now we define the subset $C$ by the union of 
\begin{align*}
&\{b\in B \,|\, T|_{X_{b}} \text{ is not well-defined or } 
\I{mT|_{X_{b}}} \not = \I {mT}|_{X_{b}} \} \text{ and }\\
&\{b\in B \,|\, T_k|_{X_{b}} \text{ is not well-defined or } 
\I{m(1+\delta)T_k|_{X_{b}}} \not = \I {m(1+\delta)T_k}|_{X_{b}} \}, 
\end{align*}
where $\delta$ and $m$ run through positive rational numbers. 
Then the union $C$  also has Lebesgue measure zero 
since $C$ is a countable union of subsets of Lebesgue measure zero. 
Therefore it follows that 
the restriction $T_{k}|_{X_{b}}$ gives an equisingular approximation 
of $T|_{X_{b}}$ on the fiber $X_{b}$ for an arbitrary point $b \in B \setminus C$.  
\end{proof}

Since $T_k$ has analytic singularities, 
we can take a modification $f_{k}\, \colon \, X_{k} \to X$ such that 
$f_{k}^{*}(T_{k}) = P_{k} + [E_{k}]$, 
where $P_{k}$ is a smooth semi-positive $(1,1)$-form on $X_k$ and $[E_{k}]$ is the integration current 
of an effective $\mathbb{R}$-divisor $E_{k}$. 
We consider the restriction of $f_{k}$ to $X_{k,b}:=f_{k}^{-1}(X_b)$
$$
f_{k,b}:= f_{k}|_{f_{k}^{-1}(X_b)}\, \colon \, X_{k, b}:= f_{k}^{-1}(X_b) \to X_b. 
$$
Let $Z_{k}$ be a subvariety of $X$ such that 
$f_{k} \, \colon \, X_{k} \setminus f_{k}^{-1}(Z_k) \to X \setminus Z_{k}$ 
is an isomorphism. 
Since the subset $C_k:=\{b \in B \,|\, X_{b} \subset Z_{k} \text{ or } X_{k, b} \subset E_k \}$ 
is a proper subvariety of $B$, 
by replacing $C$ with $\cup_{k=1}^{\infty} C_{k} \cup C$ (if necessarily), 
we may assume that 
the restriction $f_{k,b} \, \colon \, X_{k, b}\to X_b$ is 
a modification and the fiber $X_{k, b}$ is not contained in $E_{k}$ for every $b \in B \setminus C$. 
Then, for every $b \in B \setminus C$, we have 
$$
f_{k,b}^{*}(T_{k}|_{X_{k,b}}) = f_{k}^{*}(T_{k})|_{X_{k,b}} = P_{k}|_{X_{k,b}} + [E_{k} |_{X_{k,b}}]. 
$$
Then, for a K\"ahler form $\omega$ on $X$ and a non-negative integer $d$, 
we can see that 
\begin{align*}
\int_{X_b} (T_{k}|_{X_{b}})_{\rm{ac}}^{d} \wedge \omega|_{X_{b}}^{n-d} 
&= \int_{X_b \setminus {\rm{Sing}}\, T_{k}|_{X_{b}}} (T_{k}|_{X_{b}})^d \wedge \omega|_{X_{b}}^{n-d} \\
&= \int_{X_{k,b} \setminus {\rm{Supp}} E_{k}} (P_{k}|_{X_{k, b}})^d \wedge 
f_{k,b}^{*} \omega|_{X_{b}}^{n-d} \\
&= \int_{X_{k,b}} (P_{k}^d  \wedge f_{k}^{*} \omega^{n-d})|_{X_{k, b}}, 
\end{align*}
where $n$ is the dimension of $X_b$.

Now we consider 
the push-forward 
${(\pi \circ f_{k})}_* (P_{k}^{d} \wedge f_{k}^{*} \omega^{n-d})$ of 
the smooth $(n,n)$-form. 
This push-forward is a $d$-closed $(0,0)$-current, 
and thus it must be a constant function on $B$. 
Let $C'_{k}$ be a subvariety of $B$ such that 
$\pi \circ f_{k} $ is a smooth morphism over $B \setminus C'_{k}$. 
Then the push-forward 
${(\pi \circ f_{k})}_* (P_{k}^{d} \wedge f_{k}^{*} \omega^{N-d})$ is 
a smooth function whose value at $b \in B \setminus C'_{k}$ is given by the fiber integral. 
Therefore, replacing $C$ with $\cup_{k=1}^{\infty} C'_{k} \cup C$ again, 
we can check that  
$$
{(\pi \circ f_{k})}_* (P_{k}^{d} \wedge f_{k}^{*} \omega^{n-d})(b) 
=\int_{X_{k,b}} (P_{k}^d  \wedge f_{k}^{*} \omega^{n-d})|_{X_{k, b}}
=\int_{X_b} (T_{k}|_{X_{b}})_{\rm{ac}}^{d} \wedge \omega|_{X_{b}}^{n-d} 
$$
for $b \in B \setminus C$ by the above argument. 
The left hand side does not depend on $b \in B \setminus C$ 
since ${(\pi \circ f_{k})}_* (P_{k}^{d} \wedge f_{k}^{*} \omega^{n-d})$ is 
a constant function. 
Hence we obtain the desired conclusion. 
\end{proof}

\subsection{Proof of Theorem \ref{KVN}}
As an application of 
\cite[Theorem 1.3]{Cao14}, \cite[Theorem 1.1]{GZ15a}, and \cite[Theorem 1.1]{Mat16}, 
we prove Theorem \ref{KVN} by the same argument as in \cite[Theorem 1.7]{Mat16}. 
We first define the relative numerical dimension by using Theorem \ref{num}. 

\begin{defi}[Relative numerical dimension]
\label{def-rel}
Let $\pi \colon X \to \Delta$ be a surjective proper K\"ahler morphism 
from a complex manifold $X$ to an analytic space $\Delta$, and 
let $T$ be a positive $d$-closed $(1,1)$-current on $X$. 
For a Zariski open set $B$ over which $\pi$ is smooth, 
by taking $C \subset B$ of Lebesgue measure zero satisfying the property of 
Theorem \ref{num}, 
we define the relative numerical dimension $\ndrel{T} $ 
$$
\ndrel{T} := \nd{X_b, T|_{X_{b}}}
$$
for $b \in B \setminus C$. 
(See subsection \ref{Sec2-1} for the definition of the usual numerical dimension.) 
\end{defi}

\begin{proof}[Proof of Theorem \ref{KVN}]
For a Zariski open set $B$ in $C$ over which $\pi$ is smooth, 
we take $C \subset B$ of Lebesgue measure zero with the property of Theorem \ref{num}. 
We replace $C$ with $C \cup (\Delta \setminus B)$. 
Then, by the argument of Claim \ref{claim}, 
we obtain the additional property\,$:$ 
$$\text{
$\nd{F|_{X_{b}},h|_{X_{b}}} = \ndrel{F,h}$ 
and $\I{h|_{X_{b}} } = \I{h} |_{X_{b}}$ holds 
for every $b \in \Delta \setminus C$. }
$$
For $q>n-\ndrel{F,h}$ and for $b \in \Delta \setminus C$, 
we have the vanishing theorem 
\begin{align*}
H^{q}(X_{b}, \mathcal{O}_{X_{b}}(K_{X} \otimes F)\otimes \I{h})&= 
H^{q}(X_{b}, \mathcal{O}_{X_{b}}(K_{X} \otimes F)\otimes \I{h|_{X_{b}}})
=0
\end{align*}
on a fiber $X_b$ by \cite[Theorem 1.3]{Cao14} and \cite[Theorem 1.1]{GZ15a}. 
In particular, for the Zariski open set $\Delta'$ in $\Delta$ defined by 
$$
\Delta':=\{b \in \Delta \, |\, \text{$\pi$ is smooth at $b$ and 
$R^{q}\pi_{*}(K_{X}\otimes F \otimes \I{h})$ is locally free at $b$}\}, 
$$ 
we can see that $R^{q} \pi_{*}(K_{X} \otimes F \otimes \I{h})_{b}=0$ 
for every $b \in \Delta' \setminus C$ 
by the flat base change theorem. 
Hence we have  
$R^{q}\pi_{*}(K_{X}\otimes F \otimes \I{h})=0$ on $\Delta'$. 
We obtain the desired conclusion 
since $R^{q}\pi_{*}(K_{X}\otimes F \otimes \I{h})$ 
is torsion free by \cite[Theorem 1.1]{Mat16}. 
\end{proof}

\subsection{Proof of Theorem \ref{mainn}}
We finally prove Theorem \ref{mainn} as an application of \cite{Mat17}.

\begin{proof}[Proof of Theorem \ref{mainn}]
The conclusion is obvious in the case  $q=0$. 
Further we can easily check the conclusion in the case  $q=2$ 
by the Serre duality. 
Indeed, by the Serre duality, we have  
$$
H^{2}(X, K_X \otimes D \otimes F)=H^{0}(X,  \mathcal{O}_{X}(D \otimes F)^{*})=0
$$ 
unless $s$ is a non-vanishing section. 
Hence it is enough to consider the case  $q=1$. 

Let $\alpha$ be a cohomology class $\alpha \in H^{q}(X, K_X \otimes D \otimes F)$ 
satisfying $s\alpha=0  \in H^{q}(X, K_X \otimes D \otimes F^{m+1})$. 
By \cite[Theorem 1.6]{Mat17}, it is sufficient to show that 
$\alpha$ belongs to the image of 
the morphism 
$$
\theta_{D}: H^{q}(X, K_X \otimes F) \to H^{q}(X, K_X\otimes D \otimes F)
$$
induced by the effective divisor $D$. 

For the irreducible decomposition $D=\sum_{i \in I}D_{i}$ of $D$, 
we define the divisors $D_J$ and $D_K$ by 
$$
D_{J}:=\sum_{j\in J} D_{j}   \text{ and } D_{K}:=D -D_{J}, 
\text{ where } J:=\{ i \in I \,|\, D_{i} \cap s^{-1}(0) \ \not= \emptyset \}. 
$$
We consider the commutative long exact sequence induced 
by the standard short exact sequence\,$:$ 
\begin{align}
\vcenter{ 
\xymatrix{
&\ar[d] & \ar[d]\\
&H^q(X,K_{X}\otimes D_J \otimes F )
\ar[d]^-{\theta_K} \ar[r]^-{\otimes s} 
&H^q(X, K_{X} \otimes D_J \otimes F ^{m+1})\ar[d]\\ 
&H^q(X, K_{X}\otimes D \otimes F)
\ar[d]^-{r_K}\ar[r] ^-{\otimes s}
&H^q(X, K_{X}\otimes D \otimes F ^{m+1}) \ar[d]\\ 
&H^q(D_K, K_{D_K} \otimes D_J \otimes F)
\ar[d]\ar[r]^-{f:=\otimes s|_{D_{K}}} 
& H^q(D_K, K_{D_K}\otimes  D_J \otimes F ^{m+1})\ar[d]\\ 
& & 
}}
\end{align}
Here $\theta_{K}$ (resp. $r_{K}$) is the morphism induced by the effective divisor $D_K$ 
(resp. the restriction to $D_{K}$). 
It follows that that $f(r_K(\alpha))=0$ from the assumption $s\alpha=0$. 
On the other hand, the morphism $f$ admits the inverse map 
since the section $s$ is non-vanishing on $D_{K}$ by the definition of $D_K$.  
Therefore we can find $\beta \in H^q(X,K_{X}\otimes D_J \otimes F )$ such that 
$\alpha = \theta_K(\beta)$. 

For a given index $i \in J$, 
we consider $r_{i} (\beta) \in H^q(D_i,K_{D_{i}} \otimes \hat{D}_j \otimes F )$, 
where $\hat{D}_i:=D_{J} - D_i$ and $r_i$ is the morphism induced by the restriction to $D_{i}$. 
It follows that $\deg F|_{D_i} > 0$ since $s^{-1}(0)$ intersects with $D_i$ by the definition of $J$, 
and thus we have $H^q(D_i,K_{D_{i}} \otimes \hat{D}_j \otimes F )=0$ 
by the vanishing theorem on the curve $D_i$. 
In particular, we can take $\beta' \in H^q(X, K_X \otimes \hat{D}_i \otimes F )$ 
such that $\beta = \theta_{i}(\beta')$. 
For $j \in J$ with $j\not= i$, 
it can be seen that  
$$
r_{j}(\beta') \in 
H^q(D_j, K_{D_j} \otimes \hat{D}_{ij} \otimes F )=0
$$
by using the vanishing theorem again, 
where $\hat{D}_{ij}:=D_{J} - (D_i+D_j)$. 
Hence we can take $\beta'' \in H^q(X, K_X \otimes \hat{D}_{ij} \otimes F )$ 
such that $\beta = \theta_{i}( \theta_{j}(\beta''))=\theta_{ij}(\beta'')$, 
where $\theta_{ij}$ is the morphism induced by the effective divisor $D_i+D_j$. 
By repeating this process, 
we can conclude that $\alpha=\theta_K(\beta)=\theta_K(\theta_J(\gamma))=\theta_D (\gamma)$ 
for some $\gamma \in H^q(X, K_X  \otimes F )$. 
This completes the proof by \cite[Theorem 1.6]{Mat17}. 
\end{proof}

%\if0
%\newpage
%%%%%%%%%%%%%%%%%%%%%%

%\fi

\begin{thebibliography}{n}

\bibitem[Amb03]{Amb03}
F. Ambro,   
\textit{Quasi-log varieties}, 
Tr. Mat. Inst. Steklova {\bf{240}} (2003), 
Biratsion. Geom. Linein. Sist. Konechno Porozhdennye Algebry, 220--239. 

\bibitem[Amb14]{Amb14}
F. Ambro,   
\textit{An injectivity theorem}, 
Compos. Math. {\bf{150}} (2014), no. 6, 999--1023. 

\bibitem[Bou02]{Bou02}
S. Boucksom,   
\textit{On the volume of a line bundle}, 
Internat. J. Math. {\bf{13}} (2002), no. 10, 1043--1063.

\bibitem[Cao14]{Cao14}
J. Cao,  
\textit{Numerical dimension and a Kawamata-Viehweg-Nadel type vanishing theorem 
on compact K\"ahler manifolds}, 
Compos. Math. {\bf {150}} (2014), no. 11, 1869--1902. 

\bibitem[Che15]{Che15}
Z. Chen,   
\textit{An $L^2$ injectivity theorem and its application}, 
Pure Appl. Math. Q. {\bf{11}} (2015), no. 2, 369--392.

\bibitem[Dem92]{Dem92}
J.-P. Demailly,  
\textit{Regularization of closed positive currents and intersection theory}, 
J. Algebraic Geom. {\bf{1}} (1992), no. 3, 361--409. 

\bibitem[DPS01]{DPS01}
J.-P. Demailly, T. Peternell, and M. Schneider,   
\textit{Pseudo-effective line bundles on compact K\"ahler manifolds}, 
Internat. J. Math. {\bf{6}} (2001), no. 6, 689--741. 

\bibitem[Eno90]{Eno90}
I. Enoki, 
\textit{Kawamata-Viehweg vanishing theorem for compact K\"ahler manifolds}, 
Einstein metrics and Yang-Mills connections (Sanda, 1990), 59--68.


\bibitem[EV]{EV92}
H. Esnault, and E. Viehweg, 
\textit{Lectures on vanishing theorems},  
DMV Seminar, {\bf{20}}. Birkh\"auser Verlag, Basel, (1992).


%\bibitem[Fuj10]{Fuj10}
%O. Fujino, 
%\textit{Theory of non-lc ideal sheaves:\,basic properties}, 
%Kyoto J. Math. {\bf{50}} (2010), no. 2, 225--245. 


\bibitem[Fuj11]{Fuj11} 
O. Fujino, 
\textit{Fundamental theorems for the log minimal model program}, 
Publ. Res. Inst. Math. Sci. {\textbf{47}} (2011), no. 3, 727--789. 

%\bibitem[Fuj12a]{Fuj12a}
%O. Fujino, 
%\textit{A transcendental approach to Koll\'ar's injectivity theorem}, 
%Osaka J. Math. {\bf{49}} (2012), no. 3, 833--852.


\bibitem[Fuj13]{Fuj13a}
O. Fujino, 
\textit{A transcendental approach to Koll\'ar's injectivity theorem I\hspace{-.1em}I}, 
J. Reine Angew. Math. {\bf{681}} (2013), 149--174. 


\bibitem[Fuj16]{Fuj12b} 
O. Fujino, 
\textit{Vanishing theorems}, 
Minimal models and extremal rays (Kyoto), 
Adv. Stud. Pure Math. {\bf{70}} (2016), 299--321. 

\bibitem[Fuj17a]{Fuj13b}
O. Fujino, 
\textit{Injectivity theorems}, 
Higher dimensional algebraic geometry (Tokyo), 
Adv. Stud. Pure Math. {\bf{74}} (2017), 131--157. 

\bibitem[Fuj17b]{Fuj15b} 
O. Fujino, 
\textit{On semipositivity, injectivity, and vanishing theorems}, 
Hodge theory and $L^2$-analysis, 
Adv. Lect. Math. (ALM), {\bf{39}} (2017) 245--282, Int. Press, Somerville, MA. 


%\bibitem[Fuj15a]{Fuj15} 
%O. Fujino, 
%\textit{Kodaira vanishing theorem for log-canonical and semi-log-canonical pairs}, 
%Proc. Japan Acad. Ser. A Math. Sci. {\textbf{91}} (2015), no. 8, 
%112--117.

\bibitem[FM16]{FM16}
O.  Fujino, and S. Matsumura, 
\textit{Injectivity theorem for pseudo-effective line bundles and its applications}, 
preprint, available at arXiv:1605.02284v2.  


\bibitem[GZ15a]{GZ15a}
Q. Guan, and X. Zhou, 
\textit{A proof of Demailly's strong openness conjecture}, 
Ann. of Math. (2) {\bf{182}} (2015), no. 2, 605--616. 


\bibitem[GZ15b]{GZ15b}
Q. Guan, and X. Zhou, 
\textit{Effectiveness of Demailly's strong openness conjecture and related problems}, 
Invent. Math. {\bf{202}} (2015), no. 2, 635--676


\bibitem[Hie14]{Hie14}
P. H. Hi$\rm{\hat{\underline{e}}}$p, 
\textit{The weighted log canonical threshold}, 
C. R. Math. Acad. Sci. Paris {\bf{352}} (2014), no. 4, 283--288. 


\bibitem[HLWY16]{HLWY16}
C. Huang, K. Liu, X. Wan, and X. Yang, 
\textit{Logarithmic vanishing theorems on compact K\"ahler manifolds I}, 
preprint, available at arXiv:1611.07671v1. 


\bibitem[Kaw82]{Kaw82}
Y. Kawamata, 
\textit{A generalization of Kodaira-Ramanujam's vanishing theorem}, 
Math. Ann. {\bf{261}} (1982), no. 1, 43--46.

\bibitem[Kol86a]{Kol86a}
J. Koll\'ar, 
\textit{Higher direct images of dualizing sheaves I}, 
Ann. of Math. (2) {\bf{123}} (1986), no. 1, 11--42.  

\bibitem[Kol86b]{Kol86b}
J. Koll\'ar, 
\textit{Higher direct images of dualizing sheaves. I\hspace{-.1em}I}, 
Ann. of Math. (2) {\bf{124}} (1986), no. 1, 171--202.   


\bibitem[Lem14]{Lem14}
L.  Lempert, 
\textit{Modules of square integrable holomorphic germs}, 
preprint, available at arXiv:1404.0407v2. 


\bibitem[KSX17]{KSX17}
K. Liu, S. Rao, and X. Wan, 
\textit{Geometry of logarithmic forms and deformations of complex structures}, 
preprint, available at arXiv:1708.00097v1. 



\bibitem[Mat15]{Mat15}
S. Matsumura, 
\textit{Injectivity theorems with multiplier ideal sheaves and their applications}, 
Complex analysis and geometry, 
Springer Proc. Math. Stat, {\textbf{144}}, Springer, Tokyo, (2015), 241--255.

\bibitem[Mat16]{Mat16}
S. Matsumura, 
\textit{Injectivity theorems with multiplier ideal sheaves for higher direct images 
under K\"ahler morphisms}, 
preprint, available at arXiv:1607.05554v2.


\bibitem[Mat17]{Mat17}
S. Matsumura, 
\textit{A transcendental approach to injectivity theorem for log canonical pairs}, 
to appear in Ann. Scuola Norm. Sup. Pisa Cl. Sci, available at arXiv:1607.07213v1. 

\bibitem[Mat18]{Mat13}
S. Matsumura, 
\textit{An injectivity theorem with multiplier ideal sheaves 
of singular metrics with transcendental singularities}, 
J. Algebraic Geom. {\bf{27}} (2018), 305--337. 

\bibitem[Tak95]{Tak95}
K. Takegoshi,  
\textit{Higher direct images of canonical sheaves tensorized with semi-positive vector bundles by proper K\"ahler morphisms},  
Math. Ann. {\bf{303}} (1995), no. 3, 389--416.


\bibitem[Tak97]{Tak97}
K. Takegoshi, 
\textit{On cohomology groups of nef line bundles tensorized with multiplier ideal sheaves 
on compact K\"ahler manifolds},  
Osaka J. Math. {\bf 34} (1997), no. 4, 783--802.


\bibitem[Tan71]{Tan71} 
S. G. Tankeev, 
\textit{On $n$-dimensional canonically polarized varieties 
and varieties of fundamental type}, 
Math. USSR-Izv. {\bf{5}} (1971), no. 1, 29--43. 



\end{thebibliography}
\end{document}